\documentclass[12pt]{amsart}
\usepackage{amsrefs,amssymb,amsmath,amssymb,amsthm,verbatim}
\usepackage[dvipsnames,svgnames,x11names,hyperref]{xcolor}
\usepackage[colorlinks=true,linkcolor=Maroon,citecolor=blue,urlcolor=blue,hypertexnames=false,linktocpage]{hyperref}
\usepackage{lipsum}
\usepackage{cleveref}
\usepackage{bookmark}
\usepackage{amsmath,thmtools,mathtools}

\newtheorem{theorem}{Theorem}[section]

\newtheorem{lemma}[theorem]{Lemma}

\newtheorem{corollary}[theorem]{Corollary}
\theoremstyle{definition}

\numberwithin{equation}{section}
\newcommand{\M}{\mathcal{M}}
\newcommand{\Rn}{\mathbb{R}^{n+1}}
\newcommand{\Sn}{\mathbb{S}^n}

\theoremstyle{remark}
\newtheorem{remark}[theorem]{Remark}

\numberwithin{equation}{section}

\oddsidemargin 0.6in \evensidemargin 0.6in 
\textwidth 5.0in

\makeatletter
\@namedef{subjclassname@2020}{%
  \textup{2020} Mathematics Subject Classification}
\makeatother

%%%%%%%%%%%%%%%%%%%%%%%%%%%%%%%%%%%%%%%%%%%%%%%%%%%%%%%%%%%%%%%%%%%%%%%%%%%%%%%%%%%%%%%%%%%%%%%%%%%%%%%%%%%%%%%%%%

\begin{document}

\title[Uniqueness of solutions]
 {Uniqueness of solutions to a class of isotropic curvature problems}
\author[]{Mohammad N. Ivaki, Emanuel Milman}
\dedicatory{}
%\subjclass[2020]{51A20, 53A15}
%\keywords{centro-affine geometry, log-Minkowski inequality, log-Minkowski problem}
\begingroup    \renewcommand{\thefootnote}{}    
    \footnotetext{The research leading to these results is part of a project that has received funding from the European Research Council (ERC) under the European Union's Horizon 2020 research and innovation programme (grant agreement No 101001677).}
\endgroup
\begin{abstract}
Employing a local version of the Brunn-Minkowski inequality,
we give a new and simple proof of a result due to Andrews, Choi and Daskalopoulos that the origin-centred balls are the only closed, self-similar solutions of the Gauss curvature flow. Extensions to various non-linearities are obtained, assuming the centroid of the enclosed convex body is at the origin. By applying our method to the Alexandrov-Fenchel inequality, we also show that origin-centred balls are the only solutions to a large class of even Christoffel-Minkowski type problems.
\end{abstract}
\maketitle
%\tableofcontents
\section{Introduction}
The Gauss curvature flow in $\mathbb{R}^3$ was proposed by Firey \cite{Fir74} as a model for the changing shape of smooth, strictly convex stones as they tumble on a beach in an idealized situation. Assuming that the solutions exist and are regular, he showed centrally-symmetric stones become round. Firey conjectured that the resulting shapes would be rounded stones even without the symmetry assumption. The existence and regularity of solutions and convergence to a point were settled later by Chou \cite{Tso85}, and in \cite{And99} Andrews succeeded in proving  Firey's conjecture. One of the key ingredients in Andrews' proof was showing that the difference of the principal curvatures decreases along the flow. The question of whether the asymptotic shape in $\mathbb{R}^{n+1}$, $n\geq 3$, is a sphere remained open until Guan and Ni \cite{GN17} showed that the normalized solution converges to a self-similar solution (i.e. a smooth, closed hypersurface whose support function is positive and equal to its Gauss curvature), and Choi and Daskalopoulos \cite{CD16} proved that these self-similar solutions are, in fact, round. Their argument relied on applying the maximum principle to a peculiar combination of principal curvatures and the position vector. In \cite{Sar22}, Saroglou announced a second approach based on the Steiner symmetrization and extended the uniqueness results of \cites{Cho85,And99,AC12,CD16,BCD17} to a non-homogeneous case. See also \cite{McC18} (the last remark), and  \cite{ACGL20}*{Chap. 15-17} for a more detailed account.  

Our first contribution is a new proof of the following theorem; the uniqueness of closed, self-similar solutions of the Gauss curvature flow (for curves the theorem was proved by Gage \cites{Gag84}; see also \cite{And03}). Throughout the paper, all hypersurfaces $\M^n$ are assumed to be closed smooth hypersurfaces in $\mathbb{R}^{n+1}$ bounding a convex compact set $K$ with strictly positive Gauss curvature $\mathcal{K}$ and having the origin in its interior. We denote the support function of $K$ by $h$.  

\begin{theorem}\label{An+BCD}\cites{Gag84,And99,CD16}
Let $\M^n$ be a smooth, strictly convex hypersurface. % with the support function $h$ and Gauss curvature $\mathcal{K}$. 
If $\mathcal{K}=h$, then $\M^n$ is the origin-centred unit sphere.
\end{theorem}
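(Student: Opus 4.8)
The plan is to convert the curvature identity into a Monge--Amp\`ere equation on $\Sn$, recognise that equation as the Euler--Lagrange equation of a logarithmic functional at fixed volume, and then let the \emph{local} (infinitesimal) Brunn--Minkowski inequality, used together with its sharp equality case, supply the rigidity.

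\emph{Reduction.} Working with the support function $h\in C^\infty(\Sn)$, the spherical image of the surface-area measure of $K$ is $dS_K=\det\nolimits_{\Sn}(\nabla^2 h+h\,g)\,d\sigma$, and $\mathcal K^{-1}$, viewed on $\Sn$ through the inverse Gauss map, equals $\det\nolimits_{\Sn}(\nabla^2 h+h\,g)$; hence $\mathcal K=h$ is the same as
\[
h\,\det\nolimits_{\Sn}(\nabla^2 h+h\,g)=1,\qquad\text{equivalently}\qquad dS_K=\tfrac1h\,d\sigma .
\]
In particular the cone-volume measure $dV_K=\tfrac1{n+1}h\,dS_K$ equals $\tfrac1{n+1}\,d\sigma$, so integrating gives $(n+1)V(K)=\int_{\Sn}h\,dS_K=|\Sn|$, i.e.\ $V(K)=|\Sn|/(n+1)=V(B)$, where $B$ is the closed unit ball. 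Moreover $\int_{\Sn}u\,dS_K=\int_{\Sn}\tfrac1h\,u\,d\sigma=0$ automatically, by Minkowski's closedness condition; this is precisely why — in contrast to the non-linear extensions — no centroid hypothesis is needed here: $K$ is \emph{a priori} stationary with respect to translations.

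\emph{Variational picture and engine.} The equation $h\,\det\nolimits_{\Sn}(\nabla^2 h+h\,g)=1$ is exactly the Euler--Lagrange equation, with Lagrange multiplier $1$, of $L\mapsto\int_{\Sn}\log h_L\,d\sigma$ on $\{\,V(L)=V(K)\,\}$: for a variation $h_t=h+t\varphi$ one has $\frac{d}{dt}\big|_0\!\int\log h_t\,d\sigma=\int\tfrac{\varphi}{h}\,d\sigma$ and $\frac{d}{dt}\big|_0 V(h_t)=\int\varphi\,dS_K$, and these coincide for all $\varphi$ precisely when $dS_K=\tfrac1h\,d\sigma$. The unit ball $B$ satisfies the same equation, so it too is such a critical point. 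I would then deform $K$ to $B$ along the Minkowski segment $h_s=(1-s)h+s$, $s\in[0,1]$, exploiting two concavities: $s\mapsto\int_{\Sn}\log h_s\,d\sigma$ is concave by pointwise concavity of $\log$, while $s\mapsto V(h_s)^{1/(n+1)}$ is concave by Brunn--Minkowski and has \emph{equal} endpoint values $V(B)^{1/(n+1)}$ since $V(K)=V(B)$. The local Brunn--Minkowski inequality is the infinitesimal form of the latter: for a smooth variation $h+t\varphi$ of a smooth strictly convex body, $V\,V''\le\tfrac n{n+1}(V')^2$ with $V'=\int\varphi\,dS_K$ and $V''=\int\varphi\,L_K\varphi\,d\sigma$, where $L_K\varphi=\operatorname{div}(Q\nabla\varphi)+e_{n-1}(r)\,\varphi$ ($Q$ the cofactor matrix of $\nabla^2 h+h\,g$, $r$ its eigenvalues), and — crucially — equality holds \emph{only} for $\varphi\in\operatorname{span}\{h,u_1,\dots,u_{n+1}\}$, i.e.\ for infinitesimal dilations composed with translations (indeed $L_Ku_k=0$ and $L_Kh=n\,\mathcal K^{-1}$, so those $n+2$ functions always realise equality). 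Feeding the stationarity of both endpoints — which makes the first variations of $\int\log h_s\,d\sigma$ and of $V(h_s)$ agree at $s=0$ and at $s=1$ — into this sharp inequality along the segment, one aims to force the Brunn--Minkowski concavity to be affine in $s$, hence an identity throughout; its equality case then gives $1-h\in\operatorname{span}\{h,u_1,\dots,u_{n+1}\}$, i.e.\ $h$ is an affine function of $u$, i.e.\ $K$ is a ball. Finally $V(K)=V(B)$ fixes the radius to $1$, and $\mathcal K=h$ rules out any translate of $B$ (whose Gauss curvature is $\equiv1\ne h$), so $\M^n$ is the origin-centred unit sphere.

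\emph{Main obstacle.} Everything up to the deformation is the standard dictionary between closed convex hypersurfaces, support functions, mixed volumes and the spherical image; the hard part is the interplay of the two concavities together with the equality-case bookkeeping. Concavity \emph{alone} yields nothing here: the scale-invariant functional whose critical point is $K$ is a difference of concave functions, so no rigidity is visible from it directly. One genuinely needs both the optimal constant $\tfrac n{n+1}$ and, above all, the precise $(n{+}2)$-dimensional equality space of the local Brunn--Minkowski inequality, in order to propagate the endpoint stationarity into a pointwise identity and conclude that $h$ must be affine. Establishing that sharp local inequality with its equality characterisation, and arranging the deformation so that equality is actually forced, is the crux.
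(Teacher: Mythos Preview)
Your proposal identifies the right engine (the local Brunn--Minkowski inequality and its sharp equality case) but does not actually close the argument; worse, the route you sketch runs into a known wall. You yourself flag the gap: ``arranging the deformation so that equality is actually forced'' is left undone. Concretely, from the matching first variations $F'(0)=V'(0)$ and $F'(1)=V'(1)$ of the entropy $F(s)=\int\log h_s\,d\sigma$ and the volume $V(h_s)$ along $h_s=(1-s)h+s$, together with the concavity of $F$ and of $V^{1/(n+1)}$, one cannot conclude that $s\mapsto V(h_s)^{1/(n+1)}$ is affine. What would make the scale-invariant functional $\int\log h_s\,d\sigma-\tfrac{|\Sn|}{n+1}\log V(h_s)$ concave along the segment is precisely the \emph{log}-Brunn--Minkowski inequality, which is strictly stronger than Brunn--Minkowski and whose isotropic equality case is essentially the theorem you are trying to prove. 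So as written the approach is circular, not merely incomplete.

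The paper avoids this by a different, and in hindsight much more direct, use of the same local inequality. Instead of testing it along a deformation toward the ball, it tests it against the \emph{coordinates of the position vector}: with $X=Dh$ and $dV=h\sigma_n\,d\mu$, set $f_\ell=\langle X,E_\ell\rangle-\tfrac{\int\langle X,E_\ell\rangle\,dV}{\int dV}$. Summing the spectral inequality $n\int f_\ell^2\,dV\le\int h^2\sigma_n^{ij}\partial_i f_\ell\,\partial_j f_\ell\,d\mu$ over $\ell$ collapses (via $\partial_iX=\tau_{ij}e_j$ and $\tfrac{\partial\sigma_n}{\partial\lambda_i}\lambda_i^2=\sigma_1\sigma_n$) to the single estimate
\[
n\int|X|^2\,dV\;\le\;\int h(\bar\Delta h+nh)\,dV\;+\;n\,\frac{\bigl|\int X\,dV\bigr|^2}{\int dV}.
\]
Under $\mathcal K=h$ one has $dV=d\mu$; using $|X|^2=h^2+|\bar\nabla h|^2$ and integrating by parts this becomes $(n+1)\int|\bar\nabla h|^2\,d\mu\le n\,\bigl|\int X\,d\mu\bigr|^2/\int d\mu$. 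The right-hand side is then bounded by Cauchy--Schwarz as $n\int\bigl((h-\bar h)^2+|\bar\nabla h|^2\bigr)\,d\mu$, and the sharp Poincar\'e inequality on $\Sn$ (with first nonzero eigenvalue $n$) turns the resulting $\int|\bar\nabla h|^2\,d\mu\le n\int(h-\bar h)^2\,d\mu$ into an equality, forcing $h$ to be affine and hence $\M^n$ a sphere; $\mathcal K=h$ then pins it at the origin with radius~$1$.

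The moral is that the local Brunn--Minkowski inequality is indeed the right tool, but the decisive idea is \emph{which} test functions to feed it: not the increment $1-h$ toward the ball, but the $n{+}1$ components of $Dh$ itself. That specific choice is what makes the inequality close up without any appeal to log-BM.
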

We use a local version of the Brunn-Minkowski inequality applied to the position vector of $\M^n$ to provide a surprisingly short proof of this theorem that differs from all the previous approaches discussed above. Our approach also yields (see Theorem \ref{thm:BCD}) the uniqueness of solutions to $\mathcal{K}^{\alpha}=h$ when $\alpha\in [\frac{1}{n+2},\frac{1}{2}]$ (see also \cites{BCD17} for a different argument). In particular, we give a new proof of the following classical theorem due to J\"{o}rgens, Calabi, and Pogorelov \cites{Jor54, Cal58,Pog72,CY86}. See also \cite{ACGL20}*{Sec. 16.4} for another proof based on the Steiner symmetrization.

\begin{theorem}\label{Calabi}
Let $\M^n$ be a smooth, strictly convex hypersurface. If $\mathcal{K}=h^{n+2}$, then $\M^n$ is an ellipsoid centred at the origin.
\end{theorem}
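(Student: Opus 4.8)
The plan is to obtain Theorem~\ref{Calabi} as the critical endpoint $\alpha=\tfrac{1}{n+2}$ of the family $\mathcal{K}^{\alpha}=h$ treated by the local Brunn--Minkowski method behind Theorem~\ref{thm:BCD}; the one new feature at this exponent is that the rigidity pins $\M^n$ down not to a sphere but to an ellipsoid. That larger family appears because $\mathcal{K}=h^{n+2}$ is the unique member of the family invariant under $\mathrm{SL}(n+1)$ acting on $\M^n$ (it is the self-similar equation of the affine normal flow): writing $w=A^{-\mathsf{T}}u/\abs{A^{-\mathsf{T}}u}$ for the outer normal of $AK$ corresponding to the outer normal $u$ of $K$, one has $h_{AK}(w)=h_K(u)/\abs{A^{-\mathsf{T}}u}$ and $\mathcal{K}_{AK}(w)=\mathcal{K}_K(u)/\big((\det A)^{2}\abs{A^{-\mathsf{T}}u}^{n+2}\big)$, so $\det A=1$ preserves the ratio $\mathcal{K}/h^{n+2}$. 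Since the origin-centred unit sphere solves the equation by Theorem~\ref{An+BCD}, so does every origin-centred ellipsoid of volume $\omega_{n+1}$; the content of the theorem is that there are no others.

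I would prove this by running the argument of Theorem~\ref{thm:BCD} at $\alpha=\tfrac{1}{n+2}$. Parametrising $\M^n$ by the inverse Gauss map $u\mapsto X(u)=\overline{\nabla}h(u)+h(u)u$ on $\Sn$, the radii of curvature are the eigenvalues of $A_h:=\overline{\nabla}^2h+h\,\mathrm{Id}$, so the equation reads $\det A_h=h^{-(n+2)}$ and the surface area measure is $dS_K=h^{-(n+2)}\,d\sigma$; in particular $\mathrm{vol}(K)=\tfrac{1}{n+1}\int_{\Sn}h^{-(n+1)}\,d\sigma=\mathrm{vol}(K^{\circ})$, reflecting that $\mathcal{K}=h^{n+2}$ is, after rescaling, the Euler--Lagrange equation of the volume product $\mathrm{vol}(K)\mathrm{vol}(K^{\circ})$. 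One then feeds a test function $\varphi$ on $\Sn$ built from the position vector $X$ into the infinitesimal (``local'') Brunn--Minkowski inequality, i.e.\ the concavity of $t\mapsto\mathrm{vol}(h+t\varphi)^{1/(n+1)}$,
\[
\int_{\Sn}\varphi\,\operatorname{tr}\!\big(\operatorname{cof}(A_h)\,A_\varphi\big)\,d\sigma\;\le\;\frac{n}{n+1}\,\frac{1}{\mathrm{vol}(K)}\Big(\int_{\Sn}\varphi\,dS_K\Big)^{2},\qquad A_\varphi:=\overline{\nabla}^2\varphi+\varphi\,\mathrm{Id},
\]
and, using the constraint $\mathcal{K}=h^{n+2}$ together with an integration by parts and a convexity inequality, bounds the left side from below; the exponent $\tfrac{1}{n+2}$ is precisely the value at which the two estimates match, forcing equality in the local Brunn--Minkowski inequality. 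Unwinding its equality case (which confines $\varphi$ to the span of $h$ and the restrictions of linear functionals to $\Sn$) together with the volume identity leaves only the origin-centred ellipsoids---as it must, by the affine invariance above---with the equation itself normalising their volume to $\omega_{n+1}$. Hence $\M^n$ is an origin-centred ellipsoid.

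The hard part is the rigidity analysis at the critical exponent. For $\alpha\in(\tfrac{1}{n+2},\tfrac12]$ the convexity step carries a strict surplus that can be absorbed only when $K$ is already a ball, which is what collapses the conclusion to the sphere; at $\alpha=\tfrac{1}{n+2}$ that surplus is gone, so the entire conclusion must be squeezed out of the equality case of the local Brunn--Minkowski inequality, and one has to verify that---for the particular $\varphi$ the argument produces---that equality forces $K$ to be an origin-centred ellipsoid and admits no further solutions. I expect this to be the main obstacle; the integrations by parts and the regularity of the inverse Gauss map used along the way are guaranteed by the standing smoothness and strict convexity of $\M^n$, and no symmetry or centroid hypothesis is required here. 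Finally, since the Legendre transform converts $\mathcal{K}=h^{n+2}$ into the Monge--Amp\`{e}re equation $\det D^2u=1$ on $\mathbb{R}^{n}$, the same argument recovers the classical J\"{o}rgens--Calabi--Pogorelov theorem.
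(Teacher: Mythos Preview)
Your approach is the paper's: apply the local Brunn--Minkowski inequality to the coordinate functions $f_\ell=\langle X,E_\ell\rangle$ of the position vector, observe that at the critical exponent equality is forced, and read off from the equality case that $D(h^{2})$ is linear, so $K$ is an origin-centred ellipsoid. The one point you leave implicit---why ``no symmetry or centroid hypothesis is required here''---is the divergence identity $\int_{\Sn}X\,h^{-(n+1)}\,d\mu=0$ (the $p=-(n+1)$ case of \eqref{integ estim}), which kills the right-hand side of \eqref{magic ineq statement} and forces equality; no auxiliary ``convexity inequality'' is needed beyond that.
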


We shall say that $\M^n = \partial K$ is origin-centred if the centroid of $K$ is at the origin; in particular, an origin-symmetric $\M^n$ is origin-centred. The following theorem, when $\varphi$ is \emph{only} a function of the support function (i.e. $\partial_2 \varphi \equiv 0$) \emph{but} without the origin-centred assumption, was proved in \cite{Sar22}. Otherwise, in this general form, it seems to be new.

\begin{theorem}\label{gen of Saroglou}
Suppose $\varphi: (0,\infty)\times (0,\infty)\to (0,\infty)$ is $C^1$-smooth with $\partial_1\varphi\geq 0, \partial_2\varphi\geq 0$, and at least one of these inequalities is strict. If $\M^n$ be a smooth, strictly convex, origin-centred hypersurface with $\varphi(h,|Dh|)\mathcal{K}=h^{n+2}$, then $\M^n$ is an origin-centred sphere. 
\end{theorem}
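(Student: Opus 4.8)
The plan is to transplant the argument for Theorem~\ref{An+BCD} to this setting, with a test function dictated by the centroid condition. First I would pass to the support function: identifying $\M^n$ with $\Sn$ via the Gauss map, write $A:=\nabla^2h+h\,\mathrm{Id}>0$ for the reverse second fundamental form, so that $\mathcal K^{-1}=\det A$; recall that the position (contact) vector is $X=\nabla h+h\,u$, that $|Dh|=|X|$ and $h=\langle X,u\rangle$ on $\Sn$, and that $dS_K=\det A\,d\sigma$. The equation becomes $\det A=\varphi(h,|X|)\,h^{-(n+2)}$ on $\Sn$. A short computation with the divergence theorem (using $\operatorname{div}(x_k x)=(n+2)x_k$) shows that the centroid of $K$ is a multiple of $\int_{\Sn}X(u)\,h(u)\,dS_K(u)$, so the origin-centred hypothesis is exactly $\int_{\Sn}h\,X_k\,dS_K=0$ for every coordinate $X_k=\langle X,e_k\rangle$, equivalently $\int_{\Sn}X_k\,\varphi(h,|X|)\,h^{-(n+1)}\,d\sigma=0$.

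Next I would feed the functions $g_k:=h\,X_k$ into the local Brunn--Minkowski inequality (the one used for Theorem~\ref{An+BCD}): since $\int_{\Sn}g_k\,dS_K=0$ by the previous step, that inequality yields $\int_{\Sn}g_k\,L_hg_k\,d\sigma\le 0$ for each $k$, where $L_h\psi:=\tfrac1n\sum c^{ij}(\nabla_i\nabla_j\psi+\psi\,\delta_{ij})$ and $(c^{ij})$ is the cofactor matrix of $A$; moreover equality forces $g_k$ to be the restriction of a linear functional. Summing over $k$,
\[
\sum_k\int_{\Sn}g_k\,L_hg_k\,d\sigma\le 0 .
\]

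The heart of the proof is the reverse inequality $\sum_k\int_{\Sn}g_k\,L_hg_k\,d\sigma\ge 0$. I would evaluate this quantity by integration by parts on $\Sn$, using $\sum_i\nabla_ic^{ij}=0$ and $D_{E_i}X=\sum_jA_{ij}E_j$, which reduces it to an integral of $\det A$ against an explicit algebraic expression in $h,\nabla h,A$ (the $\nabla^2(\textstyle\sum_kg_k^2)$-term drops out since $\int c^{ij}\nabla_i\nabla_j\phi\,d\sigma=0$, and $\sum_kg_k^2=h^2|X|^2$). Substituting $\det A=\varphi(h,|X|)\,h^{-(n+2)}$ and adding suitable constant multiples of the centroid identities $\int_{\Sn}X_k\,\varphi\,h^{-(n+1)}\,d\sigma=0$, the claim should collapse to a pointwise inequality whose sign is governed by $\partial_1\varphi\ge 0,\ \partial_2\varphi\ge 0$. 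I expect this to be the main obstacle: both the bookkeeping in the integration by parts and, more seriously, isolating the precise nonnegative combination that the sign hypotheses on $\varphi$ deliver (note that without these hypotheses, e.g.\ for $\varphi\equiv 1$, the reverse inequality can only be expected to be an equality, which is consistent with Theorem~\ref{Calabi}).

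Granting the reverse inequality, the two bounds force $\sum_k\int g_k\,L_hg_k\,d\sigma=0$, hence equality in each application of the local Brunn--Minkowski inequality, so every $g_k=h\,X_k$ is linear, say $g_k=\langle w_k,u\rangle$. Then $h^2=\langle u,X\rangle\,h=\sum_k u_k g_k$ is a positive-definite quadratic form in $u$, i.e.\ $h$ is the support function of an ellipsoid $E$ centred at the origin. Finally I would plug this back into the equation: on an origin-centred ellipsoid the affine covariance underlying Theorem~\ref{Calabi} gives $\mathcal K/h^{n+2}\equiv\mathrm{const}$ on $\Sn$, so $\varphi(h,|X|)$ must be constant along the curve $u\mapsto(h(u),|X(u)|)$. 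But at the two extreme semi-axis directions of $E$ one has $X\parallel u$, so the curve passes through $(a_{\min},a_{\min})$ and $(a_{\max},a_{\max})$ on the diagonal; since $\varphi$ is nondecreasing in each variable and strictly increasing in at least one, $\varphi(a_{\min},a_{\min})<\varphi(a_{\max},a_{\max})$ unless $a_{\min}=a_{\max}$. Hence all semi-axes coincide and $\M^n$ is an origin-centred sphere.
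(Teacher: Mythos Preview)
Your setup is essentially the paper's: feeding the coordinates of $X=Dh$ into the local Brunn--Minkowski inequality (your $\sum_k\int g_k L_h g_k\,d\sigma\le 0$ is exactly the Main Lemma specialized to $\int X\,dV=0$). The ``main obstacle'' you flag --- the reverse inequality --- is in fact a two-line pointwise computation and requires no added centroid identities. After the integration by parts (carried out once and for all in the Main Lemma), the reverse inequality is equivalent to
\[
\int \big\langle X,\bar\nabla\varphi(h,|X|)\big\rangle\,h^{-n}\,d\mu\ \ge\ 0.
\]
Since $\bar\nabla\varphi=\partial_1\varphi\,\bar\nabla h+\partial_2\varphi\,\bar\nabla|X|$ and $\langle X,\bar\nabla h\rangle=|\bar\nabla h|_{\bar g}^2$, while
\[
|X|\,\langle X,\bar\nabla|X|\rangle=|X|\,\langle\bar\nabla h,\bar\nabla|X|\rangle=\tfrac12\langle\bar\nabla|Dh|^2,\bar\nabla h\rangle=\tau(\bar\nabla h,\bar\nabla h)\ \ge\ c\,|\bar\nabla h|_{\bar g}^2
\]
with $c>0$ the least eigenvalue of $\tau=A[h]$, one gets the pointwise bound
\[
\langle X,\bar\nabla\varphi\rangle\ \ge\ \Big(\partial_1\varphi+\tfrac{c}{|X|}\,\partial_2\varphi\Big)\,|\bar\nabla h|_{\bar g}^2\ \ge\ c'\,|\bar\nabla h|_{\bar g}^2,
\]
where $c'>0$ uses that at least one of $\partial_1\varphi,\partial_2\varphi$ is strictly positive. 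This also makes your ellipsoid detour superfluous: the integrand is not merely nonnegative but $\ge c'|\bar\nabla h|_{\bar g}^2$, so combining with the Brunn--Minkowski direction forces $\bar\nabla h\equiv 0$ directly, and $\M^n$ is an origin-centred sphere without ever invoking the equality characterization.
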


An immediate corollary of \autoref{gen of Saroglou} is the following uniqueness which confirms a conjecture in \cite{CHLZ23} on the isotropic Gaussian Minkowski problem in the class of origin-centred convex bodies.

\begin{corollary}
Let $n\geq 2$. Suppose $\M^n$ is a smooth, strictly convex, origin-centred hypersurface such that $c e^{\frac{1}{2}|Dh|^2}\mathcal{K}=1$ for some $c>0$. Then $\M^n$ is an origin-centred sphere.
\end{corollary}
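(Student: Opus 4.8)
The plan is to deduce this statement directly from \autoref{gen of Saroglou}: the prescribed‑curvature equation is, after rearrangement, an instance of the equation treated there. Since $\M^n=\partial K$ has the origin in the interior of $K$, the support function $h$ is strictly positive, so the equation $c\,e^{\frac12|Dh|^2}\mathcal{K}=1$ is equivalent, upon solving for $\mathcal{K}=\tfrac1c e^{-\frac12|Dh|^2}$ and multiplying both sides by $h^{n+2}$, to
\[
  \varphi(h,|Dh|)\,\mathcal{K}=h^{n+2},\qquad \varphi(s,t):=c\,s^{n+2}e^{\frac12 t^2}.
\]

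It then remains only to check that this $\varphi$ meets the hypotheses of \autoref{gen of Saroglou}. The function $\varphi\colon(0,\infty)\times(0,\infty)\to(0,\infty)$ is smooth (hence $C^1$) and strictly positive, and
\[
  \partial_1\varphi(s,t)=c(n+2)\,s^{n+1}e^{\frac12 t^2}>0,\qquad
  \partial_2\varphi(s,t)=c\,s^{n+2}\,t\,e^{\frac12 t^2}\ge 0,
\]
so both partial derivatives are nonnegative and the first inequality is strict, exactly as required. Applying \autoref{gen of Saroglou} to the smooth, strictly convex, origin‑centred hypersurface $\M^n$ then yields that $\M^n$ is an origin‑centred sphere, which is the claim. (Conversely, a sphere of radius $r$ centred at the origin satisfies the equation with $c=r^{n}e^{-r^2/2}$, since there $\mathcal{K}\equiv r^{-n}$ and $|Dh|\equiv r$, so the solution set is nonempty — one sphere per value of $c$.)

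There is essentially no obstacle here: the corollary is a pure translation of \autoref{gen of Saroglou}, and the only genuine content is recording the sign conditions on $\varphi$ above, which should not be skipped. For context it is worth making the link to \cite{CHLZ23} explicit: writing the Gaussian surface area measure of $K$ on $\Sn$ as $(2\pi)^{-(n+1)/2}e^{-|x|^2/2}\,\frac{1}{\mathcal{K}}\,d\sigma$, and using that $|Dh|$ is the distance $|x|$ from the origin to the boundary point $x$ with the prescribed outer normal, the relation $c\,e^{\frac12|Dh|^2}\mathcal{K}=1$ says exactly that this measure is a constant multiple of the spherical Lebesgue measure $\sigma$ — the isotropic case of the Gaussian Minkowski problem — while the hypothesis $n\ge 2$ is inherited from the formulation of that problem rather than used in the argument.
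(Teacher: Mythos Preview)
Your proof is correct and follows exactly the approach implicit in the paper, which merely records this as ``an immediate corollary of \autoref{gen of Saroglou}'' without writing out the verification; your choice $\varphi(s,t)=c\,s^{n+2}e^{t^2/2}$ and the sign check on $\partial_1\varphi,\partial_2\varphi$ are precisely what is needed. The additional remarks (existence of spherical solutions, the link to the Gaussian Minkowski problem, and that $n\ge 2$ is not used in the deduction) are accurate commentary but not required for the proof.
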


There are several known results about the uniqueness of solutions to the isotropic $L_{p,q}$ Minkowski problem $h^{p-1} |Dh|^{n+1-q}\mathcal{K}=c$ with $c>0$:
\begin{itemize}
\item \cite{HZ18}, uniqueness of solutions for $p>q$;
\item \cite{CHZ19}, uniqueness of origin-symmetric solutions for \[-(n+1)\leq p<q\leq \min \{n+1,n+1+p\} ;\]
\item \cite{CL21}, uniqueness of solutions for $1<p<q\leq n+1$, or $-(n+1)\leq p<q<-1$, or $p=q$ (up-to rescaling);
\item \cite{LW22}, complete classification for $n=1$.
\end{itemize}

Here, as another corollary of \autoref{gen of Saroglou}, we state the following uniqueness result. 
\begin{corollary} Let $n\geq 2$ and assume that $-(n+1)\leq p$ and $q\leq n+1$, with at least one of these being strict. 
	 Suppose $\M^n$ is a smooth, strictly convex, origin-centred hypersurface such that  $h^{p-1} |Dh|^{n+1-q}\mathcal{K}=c$ with $c>0$. Then $\M^n$ is an origin-centred sphere. 
\end{corollary}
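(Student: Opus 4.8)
The plan is to obtain this as an immediate consequence of \autoref{gen of Saroglou}, by absorbing the powers of $h$ and $|Dh|$ into the nonlinearity $\varphi$. First I would rewrite the hypothesis $h^{p-1}|Dh|^{n+1-q}\mathcal{K}=c$ in the equivalent form
\[
  \frac{h^{n+2}}{\mathcal{K}}=\frac{1}{c}\,h^{\,n+1+p}\,|Dh|^{\,n+1-q},
\]
which suggests setting
\[
  \varphi(s,t):=\frac{1}{c}\,s^{\,n+1+p}\,t^{\,n+1-q},\qquad (s,t)\in(0,\infty)\times(0,\infty).
\]
With this choice the hypothesis reads precisely $\varphi(h,|Dh|)\,\mathcal{K}=h^{n+2}$, which is the equation appearing in \autoref{gen of Saroglou}.

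Next I would check that $\varphi$ satisfies the assumptions of \autoref{gen of Saroglou}. Since $c>0$, the function $\varphi$ is smooth, in particular $C^1$, and strictly positive on $(0,\infty)\times(0,\infty)$. Its partial derivatives are
\[
  \partial_1\varphi(s,t)=\frac{n+1+p}{c}\,s^{\,n+p}\,t^{\,n+1-q},\qquad
  \partial_2\varphi(s,t)=\frac{n+1-q}{c}\,s^{\,n+1+p}\,t^{\,n-q},
\]
whose signs are governed by $n+1+p$ and $n+1-q$ respectively. Thus $\partial_1\varphi\geq 0$ is equivalent to $-(n+1)\leq p$, and $\partial_2\varphi\geq 0$ is equivalent to $q\leq n+1$, with strictness on one side matching strictness on the other; hence the stated condition that at least one of $-(n+1)\leq p$, $q\leq n+1$ be strict is exactly the condition that at least one of $\partial_1\varphi\geq 0$, $\partial_2\varphi\geq 0$ be strict. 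I would also note that $\varphi$ is being evaluated at legitimate arguments, namely $h>0$ and $|Dh|>0$ on $\Sn$, both because the origin lies in the interior of $K$ (so every boundary point is at positive distance from the origin). Invoking \autoref{gen of Saroglou} then yields that $\M^n$ is an origin-centred sphere.

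I do not expect any genuine obstacle here: all of the geometric and analytic content is packaged inside \autoref{gen of Saroglou}, and what remains is purely the translation of the $L_{p,q}$ normalization into the form required there. The only points that call for a moment's care are keeping track of the shift in the exponent of $h$ (from $p-1$ in the Minkowski-problem normalization to $n+1+p$ after clearing the factor $h^{n+2}$ on the right-hand side), and observing that the endpoint cases $p=-(n+1)$ and $q=n+1$ are still permitted, as long as they do not occur simultaneously, since \autoref{gen of Saroglou} only demands that one of the two monotonicity inequalities be strict.
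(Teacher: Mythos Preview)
Your proposal is correct and is precisely the approach the paper intends: the corollary is stated as an immediate consequence of \autoref{gen of Saroglou}, and your choice $\varphi(s,t)=c^{-1}s^{n+1+p}t^{n+1-q}$ together with the verification of the sign conditions on $\partial_1\varphi,\partial_2\varphi$ is exactly the translation that makes this work. The only detail worth tightening is the justification that $|Dh|>0$: this holds because $Dh=X$ is the position vector of the boundary point and the origin lies in the interior of $K$, which is what you say, though the phrasing ``both because'' is slightly garbled.
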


Let $\sigma_k$ denote the $k$th elementary symmetric function of principal radii of curvature. It is known that the only smooth, strictly convex solution to the isotropic $L^p$-Christoffel-Minkowski problem $h^{1-p}\sigma_k=1$, $1-k \leq p<1,\, 1\leq k<n$, is the unit sphere; see \cite{Che20}  for the case $1-k<p<1$ and \cite{McC11} for $p=1-k$. Here, applying our method to a local form of the Alexandrov-Fenchel inequality, we extend these previous results to the following general formulation.

\begin{theorem}\label{sigma k soliton}
Let $k<n$. Suppose $\varphi: (0,\infty)\times (0,\infty)\to (0,\infty)$ is a $C^1$-smooth function with $k-1+x\partial_1(\log\varphi)(x,y)\geq 0,\,\partial_2\varphi\geq 0$.  Let $\M^n$ be a smooth, strictly convex, origin-symmetric hypersurface such that  $h\sigma_k=\varphi(h,|Dh|)$. Then $\M^n$ is an origin-centred sphere.
\end{theorem}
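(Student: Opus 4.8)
The plan is to adapt the method already used for \autoref{gen of Saroglou} --- a ``local Brunn-Minkowski'' argument applied to the position vector --- but replacing the Brunn-Minkowski inequality with a local form of the Alexandrov-Fenchel inequality appropriate to the mixed volume $V(K[n-k],L[k])$, since $\sigma_k(\nabla^2 h + h\,\mathrm{Id})$ is (up to a constant) the density with respect to which such mixed volumes are computed. Concretely, I would consider the one-parameter family of convex bodies obtained by moving $\M^n$ along its position vector, equivalently the logarithmic flow $\partial_t h = h$ (so $h_t = e^t h$), or more precisely the variation whose support function derivative at the given body encodes the equation $h\sigma_k = \varphi(h,|Dh|)$. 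The equation says that the ``speed'' $\varphi(h,|Dh|)/\sigma_k$ equals $h$; one wants to show this forces constant $h$.

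The key steps, in order: (1) Recall/derive the relevant local Alexandrov-Fenchel inequality. For a positive function $f$ on $\Sn$, if $\sigma_k(\nabla^2 f + f\,\mathrm{Id})$ is compared pointwise with $\sigma_k(\nabla^2 h + h\,\mathrm{Id})$ in an appropriate weighted sense, one gets an integral inequality with equality characterization forcing $f$ to be a linear function of $h$ (the ``infinitesimal'' Alexandrov-Fenchel), in the spirit of the stability/rigidity statements used in the Christoffel-Minkowski literature. (2) Test this inequality with $f = h$ against the equation $h\sigma_k = \varphi(h,|Dh|)$: the structural hypotheses $k-1 + x\partial_1(\log\varphi) \ge 0$ and $\partial_2\varphi \ge 0$ are exactly the monotonicity conditions that make the resulting weighted inner product have a sign --- these play the role that $\partial_1\varphi \ge 0$, $\partial_2\varphi \ge 0$ played in \autoref{gen of Saroglou}. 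The shift by $k-1$ comes from homogeneity: $\sigma_k$ is $k$-homogeneous in the radii, so the natural ``dilation'' test function interacts with $\varphi$ through $x\partial_1\log\varphi + (k-1)$. (3) Conclude that equality holds throughout, so by the equality case of the local Alexandrov-Fenchel inequality $h$ is an eigenfunction of the relevant linearized operator, hence (together with the origin-symmetry, which kills the first spherical harmonics) $h$ is constant, i.e. $\M^n$ is an origin-centred sphere. (4) Finally, check that the constant sphere is consistent: $h = r$ gives $\sigma_k = \binom{n}{k} r^k$, so $r \cdot \binom{n}{k} r^k = \varphi(r,0)$ has a solution, confirming existence but irrelevant to uniqueness.

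I would structure the write-up so that the analytic heart is an integral identity: multiply $h\sigma_k = \varphi(h,|Dh|)$ by a suitable test function (likely $h$ itself, or $\log h$, against the measure $\sigma_k\,d\theta$ on $\Sn$), integrate by parts to move derivatives onto $\sigma_k$, use the divergence structure of $\sigma_k$ (its variational formula $\int f\,\sigma_k(\nabla^2 h + h) = \int h\,\sigma_k(\ldots; f\ \text{replacing one}\ h)$ plus symmetry), and then invoke Alexandrov-Fenchel in the form $V(f,f,h,\ldots,h)\,V(h,h,\ldots) \ge V(f,h,\ldots)^2$ with equality iff $f$ and $h$ are homothetic. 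The monotonicity hypotheses on $\varphi$ ensure the ``error'' terms generated by $\partial_1\varphi$ and $\partial_2\varphi$ have the right sign so that they can only be discarded, and the case of equality then pins down $h$.

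The main obstacle I expect is step (1)--(2): identifying the precise local Alexandrov-Fenchel inequality with a usable equality case and matching its weight exactly to the combination $k-1+x\partial_1(\log\varphi)$. Unlike Brunn-Minkowski, Alexandrov-Fenchel is subtle in its equality cases (they are not fully classified in general), but in the present smooth, strictly convex, origin-symmetric setting the relevant infinitesimal inequality --- essentially the statement that the linearized $\sigma_k$ operator $L_h$ is nonnegative on the orthogonal complement of the linear functions with respect to the $\sigma_k$-measure --- is known (this is the ellipticity/ground-state analysis underlying the Christoffel-Minkowski problem). So the real work is bookkeeping: carrying the $\varphi(h,|Dh|)$ dependence through the integration by parts and verifying that the hypotheses on $\varphi$ are precisely what is needed to absorb the extra terms. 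The dependence on $|Dh|$ (via $\partial_2\varphi \ge 0$) should be handled exactly as in the proof of \autoref{gen of Saroglou}, since $|Dh|$ is invariant under the relevant homothety and contributes a manifestly signed term.
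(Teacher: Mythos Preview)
Your plan matches the paper's approach, but two points deserve sharpening, as your description wavers on them in ways that could derail the execution.

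First, the test functions to plug into the spectral Alexandrov--Fenchel inequality are the \emph{components} $f_\ell = \langle Dh, E_\ell\rangle$ of the position vector (as you correctly say at the outset), not $f=h$ or $\log h$ as your later ``integral identity'' paragraph suggests. Summing the spectral inequality over $\ell$ and using the algebraic identity $\sum_\ell \sigma_k^{ij}\partial_i f_\ell\,\partial_j f_\ell = \sigma_1\sigma_k - (k+1)\sigma_{k+1}$ yields
\[
k\int |X|^2\,dV_k \;\le\; \int h\Bigl(\sigma_1 - (k+1)\tfrac{\sigma_{k+1}}{\sigma_k}\Bigr)dV_k
\]
once origin-symmetry kills $\int X\,dV_k$; this is the paper's Main Lemma for general $k$, and it is where the position vector enters. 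Testing with $f=h$ would give nothing (it violates the zero-mean constraint in the spectral formulation).

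Second, after substituting $h\sigma_k=\varphi$ and integrating by parts (using $\bar\nabla_i\sigma_{k+1}^{ij}=0$ together with the Newton identities $\sigma_{k+1}^{ij}\bar g_{ij}=(n-k)\sigma_k$ and $\sigma_{k+1}^{ii}=\sigma_k-\lambda_i\sigma_k^{ii}$), one arrives at
\[
\int \bigl(k-1+h\,\partial_1\log\varphi\bigr)\varphi\,|\bar\nabla h|_{\bar g}^2\,d\mu \;+\; 2\int \varphi\,\lambda_i\,\tfrac{\sigma_k^{ii}}{\sigma_k}\,(\partial_i h)^2\,d\mu \;\le\; 0.
\]
The first integrand is nonnegative by hypothesis and the second is \emph{strictly} positive wherever $\bar\nabla h\neq 0$, so $\bar\nabla h\equiv 0$ follows directly --- no equality-case analysis of Alexandrov--Fenchel is needed. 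The ``main obstacle'' you anticipate in your final paragraph therefore does not arise; the bookkeeping is exactly as you predict, and the $\partial_2\varphi\ge 0$ term is discarded during the integration by parts just as in \autoref{gen of Saroglou}.
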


\section{Background}
Let $(\Rn,\delta:=\langle\,,\rangle,D)$ be the Euclidean space with its standard inner product and flat connection. 
 $(\mathbb{S}^n,\bar{g},\bar{\nabla})$ denotes the unit sphere equipped with its standard round metric and Levi-Civita connection. Moreover, $\mu$ is the spherical Lebesgue measure of the unit sphere.

Let $K \subset \Rn$ be a smooth, strictly convex body with the origin in its interior. We write $\M:=\partial K$ for the boundary of $K$. The Gauss map of $K$, $\nu: \M\to \Sn$, takes $p \in\M$ to its unique outer unit normal vector.
The support function and Gauss curvature of $\M$ are defined as
\begin{align*}
h(x)=\langle \nu^{-1}(x),x\rangle,\quad 
\frac{1}{\mathcal{K}(x)}=\frac{\det (\bar{\nabla}^2h+\bar{g}h)}{\det(\bar{g})}\Big|_x, \quad\quad x\in \Sn.
\end{align*}
The inverse Gauss map $X = \nu^{-1} : \mathbb{S}^n \rightarrow \mathcal{M}$ is given by
\[ X(x) := Dh(x)=h(x)x+\bar{\nabla}h(x)\quad \forall x\in \mathbb{S}^n.\]
Note that $A[h] := \bar{\nabla}^2h+\bar{g} h = D^2 h|_{T \mathbb{S}^n}$ is positive-definite.  
We set
\[\sigma_n=\frac{1}{\mathcal{K}} ,\quad dV=h\sigma_nd\mu. \] % Not used: \quad g=\frac{\bar{\nabla}^2h+\bar{g}h}{h}.\]
The measure $\sigma_n d\mu$ is the surface-area measure of $K$, obtained as the push-forward of $\mathcal{H}^{n}|_{\partial K}$ via the Gauss map; the measure $\frac{1}{n+1}V$ is the cone-volume measure of $K$, whose mass is equal to the volume of $K$. We refer to \cites{Sch14,KM22},\cite{ACGL20}*{Sec.~18.7} for additional background. 
 %, and $g$ is the centro-affine metric of $\M$; cf. \cites{Mil21,NS94}.

For real symmetric $n\times n$ matrices $M_1,\ldots,M_{n}$, write $Q(M_1,\ldots,M_{n})$ for their mixed discriminant; see \cite{Sch14}*{(2.64),(5.117)}. Let $P_{n}$ be the group of all permutations of $\{1,\ldots,n\}$ and $\varepsilon:P_{n}\to \{-1,1\}$ be defined by $\varepsilon(\sigma)=1$ $(-1)$ if $\sigma$ is even (odd). The mixed discriminant of $f_k\in C^{2}(\Sn), 1\leq k\leq n,$ is a multilinear operator defined as
\begin{align*}
Q\left(A[f_1],\ldots, A[f_{n}]\right)
&=\frac{1}{n!}\sum_{a,b\in P_{n}}\varepsilon(a)\varepsilon(b)\prod_{k=1}^{n}(A[f_k])_{a(k)b(k)},
\end{align*}
where in a local orthonormal frame of $\Sn$ the entries of the matrix $A[f_k]$ are given by $(A[f_k])_{ij}=\bar{\nabla}^2_{i,j}f_k+\bar{g}_{ij}f_k$. We define the mixed volume of $f_k\in C^{2}(\Sn), 1\leq k\leq n+1,$ by
\begin{align*}
V(f_1,\ldots,f_{n+1})=\frac{1}{n+1}\int f_1Q\left(A[f_2],\ldots, A[f_{n+1}]\right)d\mu.
\end{align*}
It is known that $V$ is invariant under the permutation of its arguments. 
We also set
\begin{align*}
V_{k+1}(f_1,\ldots,f_{k+1})=V(f_1,\ldots,f_{k+1},1,\ldots,1),
\end{align*}
where $1$ appears $(n-k)$-times on the right-hand side.  
Due to \cite{Sch14}*{Thm.~7.6.8} (cf. \cite{An97}*{Lem.~8}), for all $f\in C^2(\Sn)$ we have the following local version of (a particular case of) the Alexandrov--Fenchel inequality:
\begin{align} \label{A-F}
V_{k+1}(fh,h,\ldots,h)^2\geq V_{k+1}(fh,fh,h,\ldots,h)V_{k+1}(h,\ldots,h).
\end{align}
Equality holds if and only if for some vector $v \in \mathbb{R}^{n+1}$ and constant $c \in \mathbb{R}$ we have
\[f(x)=\langle \frac{x}{h(x)},v\rangle +c\quad \forall x\in \Sn.\]

Let us put $\tau:=A[h]$ and write $\{\lambda_i\}_{i=1}^n$ for its eigenvalues. Define
\begin{align*}
\sigma_k=\sigma_k(\tau)&=\sum_{1\leq i_1<\cdots<i_k\leq n}\lambda_{i_1}\lambda_{i_2}\cdots\lambda_{i_k},\quad \sigma_k^{ij}=\frac{\partial \sigma_k}{\partial \tau_{ij}},\quad 1\leq k\leq n.
\end{align*}
Note that $\sigma_k^{ij}\tau_{ij}=k\sigma_k$ by Euler's identity, and that $\sigma_{n+1}=0$ and $\sigma_1=\bar{\Delta}h+nh$. We also introduce the measures
\[
dV_k := h \sigma_k d\mu ,
\]
so that $V = V_n$. It is known that
\begin{align*}
c'_{k} V_{k+1}(fh,h,\ldots,h) &= \int f h \sigma_k d\mu = \int f dV_k ,\\
c_{k} V_{k+1}(fh,fh,h,\ldots,h) &= \int f h \sigma_k^{ij}(A[fh])_{ij} d\mu . 
\end{align*}

\section{Spectral estimate}

The following is a spectral interpretation of \eqref{A-F}; the case $k=n$ is the spectral formulation of the Brunn-Minkowski inequality originating in Hilbert's work and studied e.g. in \cites{KM22,Mil21}. 

\begin{lemma}[\cite{An97}*{Lem.~8}, \cite{ACGL20}*{Prop.~18.35}] \label{local Vk ineq}
Let $f\in C^2(\Sn)$ with $\int fh\sigma_kd\mu=0$. Then we have
\begin{align}\label{L1BM}
k\int f^2h\sigma_kd\mu\leq \int h^2\sigma_k^{ij}\partial_if\partial_jfd\mu.
\end{align}
Equality holds if and only if for some vector $v\in \mathbb{R}^{n+1}$ we have
\[f(x)=\langle \frac{x}{h(x)},v \rangle\quad \forall x\in \Sn.\]
\end{lemma}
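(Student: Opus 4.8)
The plan is to deduce \eqref{L1BM} directly from the local Alexandrov--Fenchel inequality \eqref{A-F}, by means of a single integration-by-parts identity, and to read off the equality case from the equality characterization already attached to \eqref{A-F}.

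First I would establish, for every $f\in C^2(\Sn)$, the identity
\[
\int fh\,\sigma_k^{ij}(A[fh])_{ij}\,d\mu \;=\; k\int f^2h\sigma_k\,d\mu \;-\; \int h^2\sigma_k^{ij}\,\bar\nabla_if\,\bar\nabla_jf\,d\mu ,
\]
whose left-hand side is (a positive multiple of) $V_{k+1}(fh,fh,h,\ldots,h)$. To prove it, set $u:=fh$, write $(A[u])_{ij}=\bar\nabla_i\bar\nabla_ju+\bar g_{ij}u$, and use three facts about the round sphere: the Codazzi-type identity that $\bar\nabla\tau$ is totally symmetric, which yields the divergence-free property $\bar\nabla_i\sigma_k^{ij}=0$; Euler's identity $\sigma_k^{ij}\tau_{ij}=k\sigma_k$; and $\sigma_k^{ij}\bar g_{ij}=(n-k+1)\sigma_{k-1}$ (the trace of the $(k-1)$-st Newton tensor), so that $\sigma_k^{ij}\bar\nabla_i\bar\nabla_jh=k\sigma_k-(n-k+1)h\sigma_{k-1}$. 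One integration by parts gives
\[
\int u\,\sigma_k^{ij}(A[u])_{ij}\,d\mu \;=\; -\int \sigma_k^{ij}\,\bar\nabla_iu\,\bar\nabla_ju\,d\mu \;+\;(n-k+1)\int u^2\sigma_{k-1}\,d\mu .
\]
I would then expand $\bar\nabla_iu=h\,\bar\nabla_if+f\,\bar\nabla_ih$ in the first integral and integrate the resulting cross term $2\int fh\,\sigma_k^{ij}\bar\nabla_if\,\bar\nabla_jh\,d\mu$ by parts (again using $\bar\nabla_i\sigma_k^{ij}=0$). The point is that the terms containing $\sigma_k^{ij}\bar\nabla_ih\,\bar\nabla_jh$ cancel, and the two occurrences of the lower-order integral $\int f^2h^2\sigma_{k-1}\,d\mu$ cancel as well, which leaves exactly the displayed identity.

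Granting the identity, the inequality is immediate. If $\int fh\sigma_k\,d\mu=0$, i.e.\ $V_{k+1}(fh,h,\ldots,h)=0$, then \eqref{A-F} reads $0\ge V_{k+1}(fh,fh,h,\ldots,h)\,V_{k+1}(h,\ldots,h)$, and since $V_{k+1}(h,\ldots,h)$ is a positive multiple of $\int h\sigma_k\,d\mu>0$ we conclude $V_{k+1}(fh,fh,h,\ldots,h)\le0$; by the identity this is precisely $k\int f^2h\sigma_k\,d\mu\le\int h^2\sigma_k^{ij}\bar\nabla_if\,\bar\nabla_jf\,d\mu$. For the equality case, equality in \eqref{L1BM} forces the right-hand side of the identity to vanish, i.e.\ $V_{k+1}(fh,fh,h,\ldots,h)=0$; together with $V_{k+1}(fh,h,\ldots,h)=0$ and $V_{k+1}(h,\ldots,h)\ne0$ this is equality in \eqref{A-F}, hence $f(x)=\langle \tfrac{x}{h(x)},v\rangle+c$ for some $v\in\Rn$ and $c\in\mathbb{R}$. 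Since linear functions satisfy $A[\langle\,\cdot\,,v\rangle]\equiv0$ on $\Sn$, one has $\int\langle x,v\rangle\sigma_k\,d\mu=0$ (the $k$-th area measure has centroid at the origin; it also follows from a short integration by parts using $\bar\nabla_i\sigma_k^{ij}=0$), so the constraint reduces to $c\int h\sigma_k\,d\mu=0$ and therefore $c=0$ because $h,\sigma_k>0$; thus $f(x)=\langle \tfrac{x}{h(x)},v\rangle$. Conversely any such $f$ satisfies the constraint and lies in the equality set of \eqref{A-F}, hence realizes equality in \eqref{L1BM}.

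The only genuinely computational part of this plan — and the step most prone to sign errors — is the double integration by parts leading to the identity, together with the verification that all $\sigma_{k-1}$ terms cancel; the mechanism that makes every boundary term disappear is the divergence-free identity $\bar\nabla_i\sigma_k^{ij}=0$, which in turn rests on $\bar\nabla\tau$ being totally symmetric on the round sphere. Beyond this bookkeeping I expect no obstacle, since the remaining content is a direct application of \eqref{A-F}.
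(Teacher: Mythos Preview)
Your proposal is correct and follows essentially the same route as the paper: establish the identity $\int fh\,\sigma_k^{ij}(A[fh])_{ij}\,d\mu=k\int f^2h\sigma_k\,d\mu-\int h^2\sigma_k^{ij}\partial_if\partial_jf\,d\mu$, then feed $V_{k+1}(fh,h,\ldots,h)=0$ into \eqref{A-F} and use $\int x\,\sigma_k\,d\mu=0$ to force $c=0$ in the equality case. The only difference is tactical: the paper expands $(A[fh])_{ij}=h\bar\nabla^2_{i,j}f+f\tau_{ij}+2\partial_{(i}f\partial_{j)}h$ \emph{before} integrating, so Euler's identity produces the $k\sigma_k$ term immediately and a single integration by parts on $\int fh^2\sigma_k^{ij}\bar\nabla^2_{i,j}f\,d\mu$ finishes the job without any $\sigma_{k-1}$ terms ever appearing; your detour through $u=fh$ introduces and then cancels those terms, which is fine but unnecessary.
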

\begin{proof} Due to $\sigma_k^{ij} \tau_{ij} = k \sigma_k$, we have
\begin{align*}
fh \sigma_{k}^{ij}(\bar{\nabla}^2_{i,j}(fh)+\bar{g}_{ij}fh)=kf^2h\sigma_k+fh^2\sigma_{k}^{ij}\bar{\nabla}^2_{i,j}f+2fh\sigma_{k}^{ij}\partial_if\partial_jh.
\end{align*}
Hence, using $\bar{\nabla}_i\sigma_k^{ij}=0$ (see \cite{ACGL20}*{Lem. 18.30}) and integrating by parts we obtain
\begin{align*}
c_kV_{k+1}(fh,fh,h,\ldots,h)&=\int fh \sigma_{k}^{ij}(\bar{\nabla}^2_{i,j}(fh)+\bar{g}_{ij}fh)d\mu\\
&=k\int f^2h\sigma_k d\mu-\int h^2\sigma_k^{ij}\partial_if\partial_jfd\mu.
\end{align*}
Since $V_{k+1}(fh,h,\ldots,h)=0$ and $V_{k+1}(h,\ldots,h) > 0$, the claim follows from \eqref{A-F}, its equality cases, and the fact that $\int x \sigma_k d\mu = 0$; see \cite{Sch14}*{(5.30)}.
\end{proof}

When $K$ is an ellipsoid centred at the origin, for some symmetric, positive definite matrix $M$, we have
\[\langle Dh(x), v\rangle =\langle\frac{x}{h(x)},M v\rangle \quad \forall v\in \Sn.\]
Moreover, $\int Dh \; dV=0,$ and
hence, for $f=\langle Dh, v\rangle$ we have equality in \eqref{L1BM}. In the next lemma, which is the main new ingredient in this work, we derive an inequality from \eqref{L1BM} by substituting such functions for $f$.

\begin{lemma}[Main Lemma]\label{magic inequality}
Let $X=Dh: \Sn\to \partial K$. Then we have
\begin{align*}
k\int |X|^2 dV_k\leq \int h \left ( \sigma_1-(k+1)\frac{\sigma_{k+1}}{\sigma_k} \right )dV_k +k\frac{|\int X dV_k|^2}{\int dV_k}.
\end{align*}
In particular, for $k=n$ we have
\begin{align}\label{magic ineq statement}
n\int|X|^2dV\leq \int h (\bar{\Delta}h+nh)dV+n\frac{\left|\int X dV\right|^2}{\int dV} .
\end{align}
Equivalently, for $k=n$ there holds
\begin{align}\label{affine ver}
\int \langle hX, \bar{\nabla}\log \frac{h^{n+2}}{\mathcal{K}} \rangle dV = \int (n|\bar\nabla h|_{\bar g}^2 - h \bar \Delta h) dV \leq n\frac{|\int XdV|^2}{\int dV}.
\end{align}
\end{lemma}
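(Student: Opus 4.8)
The plan is to apply Lemma \ref{local Vk ineq} (the spectral estimate \eqref{L1BM}) with the specific family of test functions $f_v := \langle X, v\rangle = \langle Dh, v\rangle$, one for each fixed vector $v \in \mathbb{R}^{n+1}$, and then sum over an orthonormal basis $v = e_1, \dots, e_{n+1}$. The crucial point is that \eqref{L1BM} requires $\int f h\sigma_k\, d\mu = \int f\, dV_k = 0$, which the raw $f_v$ need not satisfy; so I would instead use the centered functions $\tilde f_v := \langle X, v\rangle - c_v$ with $c_v := \frac{\int \langle X,v\rangle\, dV_k}{\int dV_k}$. Note replacing $f$ by $f - c$ leaves the right-hand side of \eqref{L1BM} unchanged (the gradient kills the constant) while the left-hand side becomes $k\int (f-c)^2\, dV_k = k\int f^2\, dV_k - k c^2 \int dV_k$ for the variance-minimizing choice of $c$. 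Summing over the orthonormal basis, $\sum_v \langle X,v\rangle^2 = |X|^2$ and $\sum_v c_v^2 = \frac{|\int X\, dV_k|^2}{(\int dV_k)^2}$, so the left-hand side of the summed inequality is exactly $k\int |X|^2\, dV_k - k\frac{|\int X\, dV_k|^2}{\int dV_k}$, which already accounts for the last term in the claimed inequality.

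The remaining work is to identify $\sum_v \int h^2 \sigma_k^{ij} \partial_i f_v \partial_j f_v\, d\mu$ with $\int h\big(\sigma_1 - (k+1)\frac{\sigma_{k+1}}{\sigma_k}\big)\, dV_k$. Here I would use the key identity $\bar\nabla_i X = \bar\nabla_i Dh = (\bar\nabla^2 h + \bar g h)_{\cdot i} \,(\text{evaluated against }x) $; more precisely, differentiating $X = hx + \bar\nabla h$ on $\Sn$ gives $\bar\nabla_j X = \tau_{j\ell}\, x^\ell$-type relations, so that $\partial_i f_v = \partial_i \langle X, v\rangle = \tau_{i\ell}\langle \partial_\ell(\text{frame}), v\rangle$; choosing the frame adapted so that $\{ \partial_\ell \}$ together with $x$ form an orthonormal basis of $\mathbb{R}^{n+1}$ along the fibre, summing over $v$ collapses $\sum_v \partial_i f_v \partial_j f_v$ to $\tau_{i\ell}\tau_{j\ell} = (\tau^2)_{ij}$ (the missing $x$-direction contributing $\langle X, x\rangle^2 = h^2$ terms which must be tracked). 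Thus $\sum_v h^2 \sigma_k^{ij}\partial_i f_v \partial_j f_v = h^2 \sigma_k^{ij}(\tau^2)_{ij} + (\text{$h^2$-correction})$. Then I invoke the Newton-type algebraic identity $\sigma_k^{ij}(\tau^2)_{ij} = \sigma_1 \cdot k\sigma_k - (k+1)\sigma_{k+1}$ (equivalently $\sigma_k^{ij}(\tau^2)_{ij} = \sigma_1 \sigma_k^{ij}\tau_{ij} - (k+1)\sigma_{k+1}$ using $\sigma_k^{ij}\tau_{ij} = k\sigma_k$ and the recursion $\sigma_{k+1} = \frac{1}{k+1}(\sigma_1\sigma_k - \sigma_k^{ij}(\tau^2)_{ij})$), which rewrites the right-hand side as $k\int h^2\big(\sigma_1 - (k+1)\frac{\sigma_{k+1}}{\sigma_k}\big)\sigma_k\, d\mu = k\int h\big(\sigma_1 - (k+1)\frac{\sigma_{k+1}}{\sigma_k}\big)\, dV_k$; dividing by... wait, reconciling the constant $k$ versus the stated inequality is a matter of bookkeeping, and the $h^2$-correction terms from the $x$-direction are precisely what makes the two sides match with the claimed coefficients. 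For $k = n$, $\sigma_{n+1} = 0$ and $\sigma_1 = \bar\Delta h + nh$, giving \eqref{magic ineq statement} directly.

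For the equivalent form \eqref{affine ver}: expand $\bar\nabla \log\frac{h^{n+2}}{\mathcal{K}} = (n+2)\bar\nabla\log h + \bar\nabla\log\sigma_n$, use $\langle hX, \bar\nabla\log h\rangle = \langle X, \bar\nabla h\rangle = |\bar\nabla h|_{\bar g}^2$ (since $X = hx + \bar\nabla h$ and $\langle x, \bar\nabla h\rangle = 0$), and integrate the $\bar\nabla\log\sigma_n$ term by parts against $dV = h\sigma_n\, d\mu$, using $\bar\nabla_i \sigma_n^{ij} = 0$ and $X^j = \tau^{j\ell}(\cdots)$ to convert $\int \langle hX, \bar\nabla\log\sigma_n\rangle\, dV$ into an expression involving $\int h\sigma_1\, dV$ and $\int |X|^2\, dV$; then substitute \eqref{magic ineq statement} and simplify $n\int|X|^2\, dV - \int h(\bar\Delta h + nh)\, dV$ down to $\int(n|\bar\nabla h|^2 - h\bar\Delta h)\, dV$ via $|X|^2 = h^2 + |\bar\nabla h|^2$ and $\int h \cdot nh\, dV$ cancellation, plus one integration by parts $\int h\bar\Delta h\, dV$ handling the $\sigma_n$-weight carefully.

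I expect the main obstacle to be the second step: correctly bookkeeping the contribution of the normal ($x$-) direction when passing from $\sum_v \partial_i f_v\partial_j f_v$ to the matrix $(\tau^2)_{ij}$, and making sure the algebraic identity $\sigma_k^{ij}(\tau^2)_{ij} = \sigma_1\sigma_k^{ij}\tau_{ij} - (k+1)\sigma_{k+1}$ is applied with the right normalization so that the coefficients $k$ and $(k+1)$ in the final inequality come out exactly as stated. The integration-by-parts manipulations for \eqref{affine ver} are routine once \eqref{magic ineq statement} is in hand, but require care because the measure $dV_k = h\sigma_k\, d\mu$ is non-trivial and one must repeatedly use the divergence-free property $\bar\nabla_i\sigma_k^{ij} = 0$.
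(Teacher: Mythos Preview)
Your approach is exactly the paper's: apply Lemma~\ref{local Vk ineq} to the centred coordinate functions $f_\ell = \langle X, E_\ell\rangle - c_\ell$ and sum over an orthonormal basis $\{E_\ell\}$ of $\mathbb{R}^{n+1}$. Two points in your sketch are off but easily fixed. First, there is \emph{no} $x$-direction correction: since $h$ is $1$-homogeneous, $D^2 h$ annihilates the radial direction, so $e_i X = \tau_{ij} e_j$ is purely tangential and hence $\sum_\ell \partial_i f_\ell \,\partial_j f_\ell = (\tau^2)_{ij}$ exactly. Second, the correct Newton identity is $\sigma_k^{ij}(\tau^2)_{ij} = \sigma_1\sigma_k - (k+1)\sigma_{k+1}$ (no factor of $k$ on the first term); with this the right-hand side is $\int h^2(\sigma_1\sigma_k - (k+1)\sigma_{k+1})\,d\mu = \int h\bigl(\sigma_1 - (k+1)\tfrac{\sigma_{k+1}}{\sigma_k}\bigr)\,dV_k$, and the stated inequality falls out with no further reconciliation. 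For \eqref{affine ver} the paper simply expands $|X|^2 = h^2 + |\bar\nabla h|^2_{\bar g}$ in \eqref{magic ineq statement} and integrates $\int h^2\langle \bar\nabla h, \bar\nabla \sigma_n\rangle\,d\mu$ by parts once; your outline of that step is correct in spirit but more elaborate than needed.
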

\begin{proof}Let $\{E_\ell\}_{\ell=1}^{n+1}$ be an orthonormal basis of $\mathbb{R}^{n+1}$. Suppose $\{e_i\}_{i=1}^n$ is a local orthonormal frame for $\Sn$ that diagonalizes $\tau$, say at $x_0$, and $\tau(e_i,e_i)|_{x_0}=\lambda_i(x_0)$. For $ \ell=1,\ldots,n+1$, define the functions
\begin{align*}
f_{\ell}:\Sn\to \mathbb{R},\quad
f_{\ell}=\langle X,E_\ell\rangle-\frac{\int \langle X,E_\ell\rangle dV_k}{\int dV_k}.
\end{align*}
Note that 
\begin{align*}
\int f_\ell dV_k =0,\quad \ell=1,\ldots,n+1.
\end{align*}
Since $e_iX = \tau(e_i,e_j)e_j$ (cf. \cite{CY76}*{(4.15)}) and hence $\partial_i f_\ell = \lambda_i \langle e_i,E_\ell  \rangle$, we obtain
\begin{align*}
\sigma_k^{ij}\partial_i f_\ell \partial_j f_\ell &=\frac{\partial \sigma_k}{\partial \lambda_i}\lambda_i^2\langle e_i,E_\ell \rangle^2,
\end{align*}
and therefore (cf. \cite{HS99}*{Prop.~2.2})
\begin{align*}
\sum_\ell \sigma_k^{ij}\partial_i f_\ell \partial_j f_\ell =\frac{\partial \sigma_k}{\partial \lambda_i}\lambda_i^2=\sigma_1\sigma_k-(k+1)\sigma_{k+1} .
\end{align*}
In addition,
\begin{align*}
\sum_\ell \int f_\ell ^2 dV_k =\int |X|^2 dV_k  -\frac{|\int X dV_k|^2}{\int dV_k}.
\end{align*}
Applying \autoref{local Vk ineq} to $f_{\ell}$ and summing over $\ell$ we obtain the first inequality, and as a particular case when $k=n$ the second inequality. Statement \eqref{affine ver} follows from \eqref{magic ineq statement} after recalling that $|X|^2 = h^2 + |\bar \nabla h|^2_{\bar g}$, $dV = h\sigma_nd\mu$ and integrating by parts.
\end{proof}

\begin{remark}\label{centroid}
By the divergence theorem, for any vector $w \in \mathbb{R}^{n+1}$,
\begin{align*}
\int \langle X,w\rangle dV & =\int_{\partial K} \langle p,w\rangle\langle p,\nu(p)\rangle \mathcal{H}^n(dp) \\
& =\int_{K} \operatorname{div}_{\Rn}(\langle x,w\rangle x ) dx =(n+2)\int_K \langle x,w\rangle dx.
\end{align*}
Therefore, the vector $\int XdV$ appearing in \eqref{magic ineq statement} and \eqref{affine ver} is a multiple of the centroid of $K$, and is equal to $0$ whenever $\M^n = \partial K$ is origin-centred. 
\end{remark}

\begin{remark}
Let $K$ be a smooth, strictly convex body, and 
$M$ be any $(n+1)\times (n+1)$ matrix. By \cite{LW13}*{Prop.~2.1}), we have
\begin{align*}
\int \langle \bar{\nabla}\log \frac{h^{n+2}}{\mathcal{K}}, \xi_M \rangle dV=0,
\end{align*}
where
\[\xi_M(x)=M x-(x^TM x)x,\quad x\in \mathbb{S}^n.\]
In particular, for an ellipsoid $E$ whose support function is given by $h_E(x)=\sqrt{x^TMx}$ (with $M$ symmetric and positive-definite), we have
\[\xi_M= D \frac{h_E^2}{2}(x)-h_E^2(x)x=\frac{1}{2}\bar{\nabla} h_E^2(x),\quad \int \langle \bar{\nabla}\log \frac{h^{n+2}}{\mathcal{K}},\bar{\nabla} h_E^2 \rangle dV =0. \]
Compare this with \eqref{affine ver}: if $K$ has its centroid at the origin, then
\[\int \langle \bar{\nabla}\log \frac{h^{n+2}}{\mathcal{K}},\bar{\nabla} h^2  \rangle dV \leq 0.\]
\end{remark}

\section{Proofs of main results}

\begin{proof}[Proof of \autoref{An+BCD}]By \autoref{magic inequality}, the identity $|X|^2=h^2+|\bar{\nabla} h|_{\bar{g}}^2$, $dV=d\mu$,  and integration by parts  we find
\begin{align}\label{key uniqueness}
(n+1)\int |\bar{\nabla} h|_{\bar{g}}^2d\mu\leq n \frac{\left|\int Xd\mu\right|^2}{\int d\mu}.
\end{align}

By \autoref{centroid}, if $K$ is origin-centred, then the right-hand of \eqref{key uniqueness} is zero and the proof is completed. In the general case, in order to estimate the right-hand side, note that
\begin{align}\label{key uniqueness 2}
\int Xd\mu&=\int \left ( (h(x)-\frac{\int hd\mu}{\int d\mu})x+\bar{\nabla}h(x) \right ) d\mu(x),\nonumber\\
\left|\int Xd\mu\right|^2&\leq\int d\mu \int \left ( (h-\frac{\int hd\mu}{\int d\mu})^2+ |\bar{\nabla}h|_{\bar{g}}^2 \right ) d\mu.
\end{align}
Inequalities \eqref{key uniqueness} and \eqref{key uniqueness 2} together yield
\begin{align*}
\int|\bar{\nabla} h|_{\bar{g}}^2d\mu\leq n\int (h-\frac{\int hd\mu}{\int d\mu})^2d\mu.
\end{align*}
On the other hand, by the  (sharp) Poincar\'{e} inequality on $\mathbb{S}^n$,
\begin{align}\label{key uniqueness 4}
n\int (h-\frac{\int hd\mu}{\int d\mu})^2d\mu\leq \int|\bar{\nabla} h|_{\bar{g}}^2d\mu.
\end{align}
So \eqref{key uniqueness 4} is an equality,  and hence $h = c + \langle x,v \rangle$ and $\M^n$ must be a sphere. Since $\mathcal{K} = h$, $\M^n$ must be the origin-centred unit sphere. 
\end{proof}

The initial part of the above argument immediately extends to yield a simple proof of the following theorem regarding the uniqueness in the isotropic $L^p$-Minkowski problem, originally established in \cite{BCD17} (where the range $p \in (-1,1)$ was also treated via a separate argument):

\begin{theorem} \label{thm:BCD}
Let $\M^n$ be a smooth, strictly convex hypersurface. If $\mathcal{K}=h^{1-p}$ with $p > -(n+1)$, and either $\M^n$ is origin-centred or $p \leq -1$, then $\M^n$ is an origin-centred ball. 
\end{theorem}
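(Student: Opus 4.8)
The plan is to run the ``initial part'' of the proof of \autoref{An+BCD} verbatim, but now with the general measure $dV = h\sigma_n \, d\mu = h \mathcal{K}^{-1} d\mu = h \cdot h^{p-1} d\mu = h^p \, d\mu$, using $\mathcal{K} = h^{1-p}$. First I would invoke \eqref{magic ineq statement} from \autoref{magic inequality} with $k=n$, so that
\begin{align*}
n\int |X|^2 \, dV \leq \int h(\bar\Delta h + nh) \, dV + n\frac{|\int X \, dV|^2}{\int dV}.
\end{align*}
Substituting $dV = h^p d\mu$, using $|X|^2 = h^2 + |\bar\nabla h|^2_{\bar g}$, and integrating the term $\int h^{p+1} \bar\Delta h \, d\mu$ by parts (which produces $-(p+1)\int h^p |\bar\nabla h|^2_{\bar g} d\mu$), I expect to arrive at a weighted Poincar\'e-type inequality of the shape
\begin{align*}
(n+1)\int h^p |\bar\nabla h|^2_{\bar g} \, d\mu + (p+1) \int h^p |\bar\nabla h|^2_{\bar g}\, d\mu \;\lessgtr\; \text{(gradient + } n\text{-term)},
\end{align*}
after collecting terms; I would need to track the exact coefficient carefully, but the upshot should be a clean inequality
\begin{align*}
c(n,p)\int h^p |\bar\nabla h|^2_{\bar g} \, d\mu \leq n\frac{|\int X \, dV|^2}{\int dV},
\end{align*}
with $c(n,p) > 0$ precisely when $p > -(n+1)$. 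This sign condition on the coefficient is exactly where the hypothesis $p > -(n+1)$ enters, and pinning it down is the first place to be careful.

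Next I would handle the right-hand side. If $\M^n$ is origin-centred, then by \autoref{centroid} the vector $\int X \, dV$ is a multiple of the centroid of $K$ and vanishes, so the inequality forces $\int h^p |\bar\nabla h|^2_{\bar g} \, d\mu = 0$, hence $\bar\nabla h \equiv 0$, so $h$ is constant and $\M^n$ is an origin-centred ball. If instead $p \leq -1$ (without the centroid assumption), I would estimate $|\int X \, dV|^2$ by Cauchy--Schwarz against the measure $dV = h^p d\mu$, exactly as in \eqref{key uniqueness 2}: writing $X = (h - \bar h)x + \bar\nabla h$ with $\bar h = \int h \, dV / \int dV$ the weighted average, one gets $|\int X \, dV|^2 \leq \int dV \cdot \int((h-\bar h)^2 + |\bar\nabla h|^2_{\bar g}) \, dV$. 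The resulting inequality must then be played against a \emph{weighted} Poincar\'e inequality on $\Sn$ with weight $h^p$; here I expect to need the Brascamp--Lieb or Bakry--\'Emery type bound, or more simply the spectral estimate \eqref{L1BM} itself with $k=n$ applied to $f = h - \bar h$ (noting $dV = h\sigma_n d\mu$, which is precisely $dV_n$), which gives $n\int (h-\bar h)^2 dV \leq \int h^2 \sigma_n^{ij}\partial_i f \partial_j f \, d\mu$. Relating $h^2 \sigma_n^{ij}\partial_i f\partial_j f$ back to $h^p|\bar\nabla h|^2_{\bar g}$ when $p \leq -1$ should yield the matching reverse inequality, forcing equality throughout.

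Finally, from the equality case I would conclude. Equality in \eqref{L1BM} (equivalently in the Alexandrov--Fenchel inequality \eqref{A-F}) forces $f = h - \bar h$ to be of the form $\langle x/h(x), v\rangle$ for some $v \in \Rn$, i.e. $h(x) - \bar h = \langle x, v\rangle / h(x)$, which combined with the structure of the problem pins down $h$ as a support function of a ball; together with $\mathcal{K} = h^{1-p}$ this identifies the radius and shows $\M^n$ is an origin-centred ball. The main obstacle I anticipate is the $p \leq -1$ case: making the weighted Poincar\'e inequality and its equality case interact correctly with the Cauchy--Schwarz step (and ensuring the weight $h^p$ does not spoil sharpness) is more delicate than the origin-centred case, which is essentially immediate. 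I would also double-check the borderline $p = -(n+1)$ is genuinely excluded by the strict sign of $c(n,p)$, consistent with the hypothesis $p > -(n+1)$.
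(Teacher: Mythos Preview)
Your origin-centred case is correct and matches the paper: starting from \eqref{magic ineq statement}, substituting $dV=h^p d\mu$, and integrating $\int h^{p+1}\bar\Delta h\,d\mu$ by parts yields exactly
\[
(n+1+p)\int h^p|\bar\nabla h|^2_{\bar g}\,d\mu \;\leq\; n\,\frac{|\int X\,dV|^2}{\int dV},
\]
and \autoref{centroid} finishes that branch.

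The $p\le -1$ branch, however, does not work as you have sketched. There are two concrete problems. First, your decomposition $X=(h-\bar h)x+\bar\nabla h$ is only valid after integration against $d\mu$, because it relies on $\int x\,d\mu=0$; with the weighted measure $dV=h^p d\mu$ one has $\int x\,dV\neq 0$ in general, so the Cauchy--Schwarz step \eqref{key uniqueness 2} cannot be copied over to produce $|\int X\,dV|^2\le \int dV\cdot\int((h-\bar h)^2+|\bar\nabla h|^2)\,dV$. Second, even if that bound held, the ``weighted Poincar\'e'' inequality you propose to extract from \eqref{L1BM} with $k=n$ and $f=h-\bar h$ gives
\[
n\int (h-\bar h)^2\,dV \;\leq\; \int h^2\,\sigma_n^{ij}\,\partial_i h\,\partial_j h\,d\mu,
\]
and the right-hand side is $\int h^{p+1}\sum_i \lambda_i^{-1}(\partial_i h)^2\,d\mu$, not $\int h^p|\bar\nabla h|^2\,d\mu$; the two estimates do not close up into a sharp matching pair, so no equality case is forced.

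The paper avoids Cauchy--Schwarz entirely at this step. Using $\bar\Delta x_i=-n x_i$ one computes $\int h^{p+1}x\,d\mu=\frac{p+1}{n}\int h^p\bar\nabla h\,d\mu$, whence the \emph{exact} identity
\[
\int X\,dV \;=\; \frac{n+1+p}{n}\int \bar\nabla h\,dV.
\]
Plugging this into the main inequality and subtracting $\frac{|\int \bar\nabla h\,dV|^2}{\int dV}$ from both sides gives
\[
\int \Bigl|\bar\nabla h-\frac{\int \bar\nabla h\,dV}{\int dV}\Bigr|_\delta^2\,dV \;\leq\; \frac{p+1}{n}\,\frac{|\int \bar\nabla h\,dV|^2_\delta}{\int dV},
\]
which for $p\le -1$ forces $\bar\nabla h$ to be a constant vector, hence zero (being tangent to $\mathbb{S}^n$). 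No Poincar\'e inequality or equality-case analysis is needed.
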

\begin{proof}
%Next we consider the characterization of solutions to $\mathcal{K}=h^{1-p}$ for $p\in[-(n+1), -1]$. 
When $dV = h^p d\mu$, \autoref{magic inequality} and integration by parts yield
\begin{align} \label{p-inq}
\frac{n+1+p}{n}\int |\bar{\nabla} h|_{\bar{g}}^2dV\leq  \frac{\left|\int XdV\right|^2}{\int dV}.
\end{align}
By \autoref{centroid}, we conclude $h$ is constant when $p > -(n+1)$ and $\M^n$ is origin-centred. In the general case, by the divergence theorem,
\begin{align}\label{integ estim}
\int X dV =\frac{n+1+p}{n}\int h^p\bar{\nabla}h d\mu.
\end{align}
Plugging \eqref{integ estim} into \eqref{p-inq}, we deduce when $p > -(n+1)$:
%For $p>-(n+1)$, due to \eqref{integ estim}, $dV=h^pd\mu$, and \autoref{magic inequality} we obtain
\[\int \left|\bar{\nabla}h-\frac{\int \bar{\nabla}hdV}{\int dV}\right|_{\delta}^2dV\leq \frac{p+1}{n}\frac{|\int \bar{\nabla}hdV|_{\delta}^2}{\int dV}.\]
Consequently, when in addition $p \leq -1$, we conclude that $h$ is constant.
\end{proof}

\begin{proof}[Proof of \autoref{Calabi}]
In the case $p=-(n+1)$, due to \eqref{integ estim}, we have $\int XdV=0$. Thus we have, in fact, equality in \eqref{magic ineq statement}:
\begin{align*}
n\int|X|^2h^{-(n+1)}d\mu= \int  (\bar{\Delta}h+nh)h^{-n}d\mu.
\end{align*}
By the characterization of the equality cases of \eqref{local Vk ineq}, we deduce that for every $v\in \Sn$, there exists a vector $w\in \Rn$, such that
\[\langle X(x),v\rangle=\langle \frac{x}{h(x)},w\rangle\quad \forall x\in \Sn.\]
Therefore, for some matrix $M$, we have
\[\langle Dh(x),v\rangle=\langle \frac{x}{h(x)},Mv\rangle\quad \forall x,v\in \Sn.\]
That is,
$Dh^2(x)=2M^Tx.$
Thus $h^2$ is a quadratic function, and $K$ is an origin-centred ellipsoid. The proof of \autoref{Calabi} is completed.
\end{proof}

\begin{proof}[Proof of \autoref{gen of Saroglou}]
Since $\int X dV=0$, from \eqref{affine ver} it follows that
\begin{align*}
\int \langle X, \bar{\nabla} \frac{h^{n+2}}{\mathcal{K}} \rangle h^{-n} d\mu = \int \langle hX, \bar{\nabla}\log \frac{h^{n+2}}{\mathcal{K}} \rangle dV\leq 0.
\end{align*}
Note that
\[
|X| \langle \bar{\nabla}|X|,\bar{\nabla}h\rangle = \frac{1}{2} \langle D |D h|^2 , \bar{\nabla}h\rangle =\tau(\bar{\nabla}h,\bar{\nabla}h) \geq c |\bar{\nabla}h|^2 ,
\]
where $c>0$ depends on $\M^n$. Due to $h^{n+2}=\varphi(h,|X|)\mathcal{K}$, $\partial_1\varphi\geq 0$ and $\partial_2\varphi \geq 0$ we have
\begin{align*}
\langle X, \bar{\nabla}\frac{h^{n+2}}{\mathcal{K}}\rangle&=|\bar{\nabla}h|^2\partial_1\varphi+\langle \bar{\nabla}|X|,\bar{\nabla}h\rangle\partial_2\varphi\geq c'|\bar{\nabla}h|^2,
\end{align*}
where $c'>0$ depends on $\M^n$ and on the strictness of at least one of the inequalities $\partial_1\varphi > 0$ or $\partial_2\varphi > 0$.
% Indeed, D^2 h has 0 as an eigenvalue in the radial direction, so <D^2 h u , v> = <D^2 h P_{\theta^{\perp}} u , P_{\theta^{\perp}} v>  
 Hence, $h$ is constant, and so $\M^n$ is an origin-centred sphere.
\end{proof}
% I think that when \partial_2 \varphi > 0 then the remark below may be incorrect, so I did not include it. 
% We would have \dashint <\bar \nabla h , \frac{h}{n} \bar \nabla \log \varphi> dV <= | \dashint \frac{h}{n} \bar \nabla \log \varphi dV |^2 . If \partial_2 \varphi = 0 then Z := \frac{h}{n} \bar \nabla \log \varphi = z \bar \nabla h  with z \in (0,1) , and hence \int |\bar \nabla h|^2 (z - z^2) \leq 0 , and we can conclude that h is constant. But if \partial_2 \varphi > 0 this is more complicated.. 
%\begin{remark}
%A similar argument to the one used in the proof of \autoref{thm:BCD} may be used to remove the origin-centred assumption in \autoref{gen of Saroglou}, if one adds the assumption that $x \partial_1 \log \varphi(x,y) < n$. 
%\end{remark}
%+\frac{\lambda_i(\partial_ih)^2}{|X|}\partial_2\varphi.
\begin{proof}[Proof of \autoref{sigma k soliton}]
In view of \autoref{magic inequality}, and $\int X dV_k=0,$ 
\begin{align*}
k\int |X|^2\varphi d\mu\leq \int (h\varphi\sigma_1-(k+1)h^2\sigma_{k+1}) d\mu.
\end{align*}
Moreover, we have
\begin{align*}
(k+1)\sigma_{k+1}=\sigma_{k+1}^{ij}\tau_{ij} = \sigma_{k+1}^{ij} (\bar{\nabla}^2_{i,j}h+\bar{g}_{ij}h).
\end{align*}
Using $\bar{\nabla}_i\sigma_{k+1}^{ij}=0$, $\partial_2\varphi\geq0$, and integrating by parts we find
\begin{align*}
k\int (h^2+|\bar{\nabla} h|^2_{\bar g})\varphi d\mu
\leq& \int (nh^2\varphi-(\varphi+h\partial_1 \varphi)|\bar{\nabla} h|^2_{\bar g}) d\mu\\
&-\int h^3\sigma_{k+1}^{ij}\bar{g}_{ij} d\mu+2\int h\sigma_{k+1}^{ij}\partial_ih\partial_jh d\mu.
\end{align*}
Choose a local orthonormal frame for $\Sn$ that diagonalizes $\tau$ at $x_0$, so that $\tau(e_i,e_i)|_{x_0}=\lambda_i(x_0)$.
Now due to the identities (cf. \cite{HS99}*{Prop.~2.2})
\begin{align*}
\forall i \;\; \sigma_{k+1}^{ii}=\sigma_{k}-\lambda_i\sigma_k^{ii},\quad 
\sigma_{k+1}^{ij}\bar{g}_{ij}=(n-k)\sigma_k=(n-k)\frac{\varphi}{h},
\end{align*}
we obtain
\begin{align*}
\int (k-1+h\partial_1(\log\varphi))\varphi|\bar{\nabla} h|^2_{\bar g} d\mu +2\int \varphi\lambda_i\frac{\sigma_k^{ii}}{\sigma_{k}}(\partial_ih)^2 d\mu \leq 0.
\end{align*}
Since $k-1+h\partial_1 \log\varphi \geq 0$ and $\lambda_i\frac{\sigma_k^{ii}}{\sigma_{k}} > 0$ for all $i$, we conclude that $h$ is constant. Hence, $\M^n$ is an origin-centred sphere.
\end{proof}
\bibliographystyle{amsalpha-nobysame}

\begin{thebibliography}{20}

\bibitem[An97]{An97} B. Andrews, \textit{Monotone quantities and unique limits for evolving convex hypersurfaces}, Int. Math. Res. Not. IMRN (1997): 1001--1031.

\bibitem[And99]{And99} B. Andrews, \textit{Gauss curvature flow: the fate of the rolling stones}, Invent. Math. \textbf{138} (1999): 151--161.

\bibitem[And03]{And03} B. Andrews, \textit{Classification of limiting shapes for isotropic curve flows}, J. Amer. Math. Soc. \textbf{16} (2003): 443--459. 

\bibitem[AC12]{AC12} B. Andrews, X. Chen, \textit{Surfaces moving by powers of Gauss curvature}, Pure Appl. Math. Q. \textbf{8} (2012): 825--834. 

\bibitem[ACGL20]{ACGL20} B. Andrews, B. Chow, C. Guenther, M. Langford, \textit{Extrinsic geometric flows},  Amer. Math. Soc. \textbf{206} (2020).

\bibitem[BCD17]{BCD17} S. Brendle, K. Choi, P. Daskalopoulos, \textit{Asymptotic behavior of flows by powers of the Gaussian curvature}, Acta Math. \textbf{219} (2017): 1--16.

\bibitem[Cal58]{Cal58} E. Calabi, \textit{Improper affine hyperspheres of convex type and a generalization of a theorem by K. J\"{o}rgens}, Michigan Math. J. \textbf{5} (1958): 105--126.

\bibitem[CHZ19]{CHZ19} C. Chen, Y. Huang, Y. Zhao, \textit{Smooth solutions to the $L_p$ dual Minkowski problem}, Math. Ann. \textbf{373} (2019): 953--976.

\bibitem[CL21]{CL21} H. Chen, Q. R. Li, \textit{The $L_p$ dual Minkowski problem and related parabolic flows}, J. Funct. Anal. \textbf{281} (2021): 109139.

\bibitem[Che20]{Che20} L. Chen, \textit{Uniqueness of solutions to $L_p$-Christoffel-Minkowski problem for $p<1$}, J. Funct. Anal. \textbf{279} (2020): 108692.

\bibitem[CHLZ23]{CHLZ23} S. Chen, S. Hu, W. Liu, Y. Zhao, \textit{On the planar Gaussian-Minkowski problem}, arXiv:2303.17389 (2023).

\bibitem[CY76]{CY76} S. Y. Cheng, S. T. Yau, \textit{On the regularity of the solution of the $n$-dimensional Minkowski problem}, Comm. Pure Appl. Math. \textbf{29} (1976): 495--516.

\bibitem[CY86]{CY86} S. Y. Cheng, S. T. Yau, \textit{Complete affine hypersurfaces. Part I. The completeness of affine metrics}, Comm. Pure Appl. Math. \textbf{39} (1986): 839--866.

\bibitem[CD16]{CD16} K. Choi, P. Daskalopoulos, \textit{Uniqueness of closed self-similar solutions to the Gauss curvature flow}, arXiv:1609.05487 (2016).

\bibitem[Cho85]{Cho85} B. Chow, \textit{Deforming convex hypersurfaces by the nth root of the Gaussian curvature}, J. Differential Geom.  \textbf{22} (1985): 117--138.

\bibitem[Fir74]{Fir74} W. J. Firey, \textit{Shapes of worn stones}, Mathematika \textbf{21} (1974): 1--11.

\bibitem[Gag84]{Gag84} M. Gage, \textit{Curve shortening makes convex curves circular}, Duke Math. J. \textbf{51} (1984): 477--484.

\bibitem[GN17]{GN17} P. Guan, L. Ni, \textit{Entropy and a convergence theorem for Gauss curvature flow in high dimension}, J. Eur. Math. Soc. (JEMS) \textbf{19} (2017): 3735--3761.

\bibitem[HZ18]{HZ18} Y. Huang, Y. Zhao, \textit{On the $L_p$ dual Minkowski problem}, Adv. Math. \textbf{332} (2018): 57--84.

\bibitem[HS99]{HS99} G. Huisken, C. Sinestrari, \textit{Convexity estimates for mean curvature flow and singularities of mean convex surfaces}, Acta Math. \textbf{183} (1999): 45--70.

\bibitem[KM22]{KM22} A. V. Kolesnikov, E. Milman, \textit{Local $L^p$-Brunn-Minkowski inequalities for $p<1$}, Mem. Amer. Math. Soc. \textbf{277} (2022): No. 1360. %(1360):v+78, 2022.

\bibitem[LW22]{LW22} H. Li, Y. Wan, \textit{Classification of solutions for the planar isotropic $L_p$ dual Minkowski problem}, arXiv:2209.14630 (2022).

\bibitem[LL22]{LL22} Y. Liu, J. Lu, \textit{On the number of solutions to the planar dual Minkowski problem}, arXiv:2209.15385 (2022).

\bibitem[LW13]{LW13} J. Lu, X. J. Wang, \textit{Rotationally symmetric solutions to the  $L_p$-Minkowski problem}, J. Differential Equations \textbf{254} (2013): 983--1005.

\bibitem[Lut93]{Lut93} E. Lutwak, \textit{The Brunn-Minkowski-Firey theory I: mixed volumes and the Minkowski problem}, J. Differ. Geom. \textbf{38} (1993): 131--150.

\bibitem[J\"{or}54]{Jor54} K. J\"{o}rgens, \textit{\"{U}ber die L\"{o}sungen der Differentialgleichung $rt-s^2=1$}, Math. Ann. \textbf{127} (1954): 130--134.

\bibitem[McC11]{McC11}  J. A. McCoy, \textit{Self-similar solutions of fully nonlinear curvature flows}, Ann. Sc. Norm. Super. Pisa Cl. Sci. \textbf{10} (2011): 317--333.

\bibitem[McC18]{McC18}  J. A. McCoy, \textit{Curvature contraction flows in the sphere}, Proc. Amer. Math. Soc. \textbf{146} (2018): 1243--1256.

%\bibitem[LHSZ15]{LHSZ15} An Min Li, Udo Simon, Guosong Zhao, and Zejun Hu. \emph{Global affine differential geometry of hypersurfaces}, de Gruyter, (2015).

\bibitem[Mil21]{Mil21} E. Milman, \textit{Centro-affine differential geometry and the log-Minkowski problem}, (to appear in) J. Eur. Math. Soc., arXiv:2104.12408 (2021).

\bibitem[NS94]{NS94} K. Nomizu, T. Sasaki, \textit{Affine differential geometry}, Cambridge Univ. Press, Cambridge (1994).

\bibitem[Pog72]{Pog72} A. V. Pogorelov, \textit{On the improper convex affine hyperspheres}, Geom. Dedicata \textbf{1} (1972): 33--46.

\bibitem[Sar22]{Sar22} C. Saroglou, \textit{On a non-homogeneous version of a problem of Firey}, Math. Ann. \textbf{382} (2022): 1059--1090.

\bibitem[Sch14]{Sch14} R. Schneider, \textit{Convex bodies: The Brunn-Minkowski theory}, Encyclopedia Math. Appl., Cambridge Univ. Press, New York, second edition (2014).

\bibitem[Tso85]{Tso85} K. Tso, \textit{Deforming a hypersurface by its Gauss-Kronecker curvature}, Comm. Pure Appl. Math. \textbf{38} (1985): 867--882.

\end{thebibliography}

\vspace{10mm}
\textsc{Institut f\"{u}r Diskrete Mathematik und Geometrie,\\ Technische Universit\"{a}t Wien, Wiedner Hauptstra{\ss}e 8-10,\\ 1040 Wien, Austria,} \email{\href{mailto:mohammad.ivaki@tuwien.ac.at}{mohammad.ivaki@tuwien.ac.at}}
		
\vspace{5mm}

\textsc{Department of Mathematics, Technion, Israel Institute of Technology, Haifa 32000, Israel,}
\email{\href{mailto:emilman@tx.technion.ac.il}{emilman@tx.technion.ac.il}}		
		
\end{document}